\newtheorem{definition}{Definition}[section]
\newtheorem{lemma}[definition]{Lemma}
\newtheorem{proposition}[definition]{Proposition}
\newtheorem{theorem}[definition]{Theorem}
\newtheorem{corollary}[definition]{Corollary}
\newtheorem{conjecture}[definition]{Conjecture}
\newcommand*{\QED}{\null\hfill$\Box$\par\medskip}%
\renewcommand{\pod}[1]{\mathchoice
  {\allowbreak \if@display \mkern 18mu\else \mkern 8mu\fi (#1)}
  {\allowbreak \if@display \mkern 18mu\else \mkern 8mu\fi (#1)}
  {\mkern4mu(#1)}
  {\mkern4mu(#1)}
}
\newcommand{\avd}{\text{\rm{avd}}}
\newcounter{eqcount}
\title{On the Unimodality of Domination Polynomials}
\author{Iain Beaton\thanks{Corresponding author}\\
\small Department of Mathematics \& Statistics\\[-0.8ex]
\small Dalhousie University\\[-0.8ex] 
\small Halifax, CA\\
\small\tt ibeaton@dal.ca\\
\and
Jason I. Brown\thanks{Supported by NSERC grant RGPIN-2018-05227}\\
\small Department of Mathematics \& Statistics\\[-0.8ex]
\small Dalhousie University\\[-0.8ex] 
\small Halifax, CA\\
\small\tt Jason.Brown@dal.ca\\
}
\begin{document}

\tikzset{bignode/.style={minimum size=3em,}}
\maketitle

\begin{abstract}
A polynomial is said to be unimodal if its coefficients are non-decreasing and then non-increasing. The domination polynomial of a graph $G$ is the generating function of the number of domination sets of each cardinality in $G$, and its coefficients have been conjectured to be unimodal. In this paper we will show the domination polynomial of paths, cycles and complete multipartite graphs are unimodal, and that the  domination polynomial of almost every graph is unimodal with mode $ \lceil \frac{n}{2}\rceil $.
\end{abstract}

\setstretch{1.4}

\section{Introduction}\label{sec:intro}

Domination in graphs has been investigated both for applied and theoretical reasons. A subset of vertices $S$ of a (finite, undirected) graph $G=(V,E)$ is a {\em dominating set} iff every vertex of $G$ is either in $S$ or adjacent to a vertex of $S$ (equivalently, for any vertex $v$ of $G$, the {\em closed neighbourhood} $N[v]$ of $v$ has nonempty intersection with $S$).  Much of the attention has been directed at the \emph{domination number} of $G$, $\gamma(G)$, the minimum cardinality of a dominating set of $G$, but overall, the study of dominating sets in graphs is quite extensive (see, for example, \cite{hedetniemi}).  

As for many graph properties, one can more thoroughly examine domination via generating functions. Let $d_i(G)$ denote be the number of dominating sets of a graph $G$ of cardinality $i$. The \emph{domination polynomial} $D(G,x)$ of $G$ is defined as

$$D(G,x) = \sum_{i=\gamma (G)}^{|V(G)|} d_i(G)x^i.$$

\noindent (See \cite{2012AlikhaniPHD}, for example, for a thorough discussion of domination polynomials.) 

A natural question for any graphs polynomial is whether or not the sequence of coefficients is unimodal: a polynomial with real coefficients $a_0 + a_1x + \cdots + a_nx^n$ is said to be \emph{unimodal} if there exists $0 \leq k \leq n$, such that

$$a_0 \leq \cdots \leq a_{k-1} \leq a_k \geq a_{k-1} \geq \cdots \geq a_{n}$$

\noindent (in such a case, we call the location(s) of the largest coefficient the {\em mode}).
To show a polynomial is unimodal, it has often been helpful (and easier) to show a stronger condition, called log-concavity, holds, as the latter does not require knowing where the peak might be located. A polynomial is \emph{log-concave} if for every $1 \leq i \leq n-1$, $a_i^2 \geq a_{i-1}a_{i+1}$. It is not hard to see that a polynomial with positive coefficients that is log-concave is also unimodal. 
%
%

A variety of techniques have been used to show many graph polynomials are log-concave, and hence unimodal, including:
\begin{itemize}
\item real analysis  (log-concavity of the matching polynomial \cite{1972Heilmann} and the independence polynomial of claw-free graphs \cite{2007Chudnovsky}),
\item homological algebra (June Huh's proof of the log concavity of chromatic polynomials), and
\item combinatorial arguments (the arguments of Krattenthaler \cite{1996Krattenthaler} and Hamidoune \cite{1990Hamidoune} that reproved the log concavity of matching polynomials and independence polynomial of claw-free graphs, respectively, as well as Horrocks' \cite{2002Horrocks} result that the dependent k-set polynomial is log-concave (a subset of vertices is {\em dependent} iff it contains an edge of the graph). 
\end{itemize}

The domination polynomial of every graph of order at most 8 is log-concave. However the domination polynomial of the graph on 9 vertices in Figure \ref{fig:DomPolynLC} is

\vspace{-2mm}
$$D(G,x) = x^9+9x^8+35x^7+75x^6+89x^5+50x^4+7x^3+x^2$$

\noindent which is not log-concave as $d_3(G)^2 = 49$ but $d_4(G)d_2(G) = 50$. 
Although not all domination polynomials are log-concave they are conjectured to be unimodal \cite{IntroDomPoly2014}.

\begin{center}
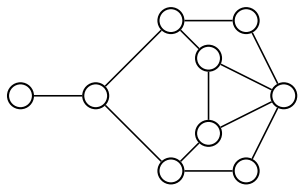
\begin{figure}[!h]
\begin{center}
\begin{tikzpicture}[> = stealth,semithick]
\begin{scope}[every node/.style={circle,thick,draw}]
    \node (1) at (0,0) {};
    \node (2) at (1,0) {};   
    \node (3) at (2,1) {};   
    \node (4) at (2,-1) {};   
    \node (5) at (2.5,0.5) {};
    \node (6) at (2.5,-0.5) {};
    \node (7) at (3,1) {};   
    \node (8) at (3,-1) {}; 
    \node (9) at (3.5,0) {}; 
    
\end{scope}

\begin{scope}
    \path [-] (1) edge node {} (2);
    \path [-] (2) edge node {} (3);
    \path [-] (2) edge node {} (4);
    \path [-] (3) edge node {} (5); 
    \path [-] (3) edge node {} (7);
    \path [-] (4) edge node {} (6);
    \path [-] (4) edge node {} (8);
    \path [-] (5) edge node {} (6);
    \path [-] (5) edge node {} (9);
    \path [-] (6) edge node {} (9);
    \path [-] (7) edge node {} (9);
    \path [-] (8) edge node {} (9);
\end{scope}
\end{tikzpicture}
\end{center}
\caption{The only graph of order 9 which is not log-concave}%
\label{fig:DomPolynLC}%
\end{figure}
\end{center}

\begin{conjecture}
\label{conj:unimodal}
\textnormal{\cite{IntroDomPoly2014}} The domination polynomial of any graph is unimodal.
\end{conjecture}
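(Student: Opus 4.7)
The plan is to attack Conjecture~\ref{conj:unimodal} in several stages rather than try for a uniform argument, since the 9-vertex example rules out a log-concavity approach and the domination polynomial lacks the analytic or algebraic structure (real-rootedness, determinantal form, clean deletion/contraction recursion) that drives the standard unimodality proofs. Concretely, I would split along the three families highlighted in the abstract: paths and cycles, complete multipartite graphs, and almost every graph.

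For almost every graph, I would work in $G(n,1/2)$ and argue that $d_k(G) = \binom{n}{k}$ for most $k$. Fix a vertex $v$; a random $k$-subset $S$ fails to dominate $v$ only if $S\cap N[v] = \emptyset$, and since in $G(n,1/2)$ we have $|N[v]|$ concentrated near $n/2$, this probability is roughly $2^{-k}$. A union bound over the $n$ vertices shows that for $k$ in the range $(1+\varepsilon)\log_2 n \le k \le n$, almost every $k$-subset of $V(G)$ is dominating, so $d_k(G) = \binom{n}{k}$ on this range. Since $d_k(G) \le \binom{n}{k}$ trivially and $\binom{n}{k}$ is unimodal with mode $\lceil n/2\rceil$, any coefficients that differ from $\binom{n}{k}$ lie in the tail and are dwarfed by the central ones, giving unimodality with the claimed mode.

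For paths, I would condition on how the last vertex of $P_n$ is dominated (it lies in $S$, or it does not but its neighbour does), producing a short linear recurrence relating $D(P_n,x)$ to $D(P_{n-1},x)$, $D(P_{n-2},x)$, $D(P_{n-3},x)$. I would then induct on $n$, strengthening the hypothesis to track the position of the mode so that the recurrence can be applied coefficient-wise; cycles follow by splitting at a vertex and reducing to a bounded combination of path polynomials. For complete multipartite graphs $K_{n_1,\dots,n_r}$, the non-dominating sets are exactly the proper subsets of a single part, so
\[
D(K_{n_1,\dots,n_r},x) = (1+x)^n - \sum_{i=1}^{r}\bigl((1+x)^{n_i} - x^{n_i}\bigr) + (r-1),
\]
and unimodality can be read off by a direct coefficient comparison: the dominant term $(1+x)^n$ is strongly unimodal with mode $\lceil n/2\rceil$, and the corrections $(1+x)^{n_i}-x^{n_i}$ are concentrated at the low-degree end, so they alter only the tail of the coefficient sequence.

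The main obstacle is the conjecture itself in full generality. The three subcases succeed because the dominating sets admit either a clean structural description (multipartite), a short linear recurrence (paths and cycles), or an overwhelming probabilistic bias toward $\binom{n}{k}$ (random graphs); none of these features is present for an arbitrary graph. Worse, the 9-vertex counterexample to log-concavity rules out the local inequality $d_k^2 \ge d_{k-1}d_{k+1}$, so any general argument must exploit global structure of the family of dominating sets, perhaps via an injection from $k$-subsets into $(k+1)$-subsets on one side of the mode and the reverse injection on the other — and constructing such injections uniformly across all graphs appears to be the genuine difficulty.
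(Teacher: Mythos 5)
First, a point of order: the statement you were given is Conjecture~\ref{conj:unimodal}, which the paper does not prove either --- it remains open, and the paper establishes it only for paths, cycles, complete multipartite graphs, and almost all graphs. Your proposal correctly recognizes this and restricts itself to the same three families, so the question is whether your sketches of those cases hold up. The paths-and-cycles plan is essentially the paper's Theorem~\ref{thm:PCunimodal}: the recurrence $D(P_n,x)=x\left(D(P_{n-1},x)+D(P_{n-2},x)+D(P_{n-3},x)\right)$ together with an induction that carries the location of the mode; the one nontrivial point you omit is the case where the modes of $f_{k-2}$ and $f_k$ differ by $2$, which the paper handles by unrolling the recurrence one extra step. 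For complete multipartite graphs your explicit formula is correct, but ``unimodality can be read off'' is not: when one part has size $n_j$ comparable to $n$, the subtracted term $\binom{n_j}{i}$ peaks near $i=n_j/2$, which is \emph{not} the low-degree end but sits near the mode $\lceil n/2\rceil$ of $\binom{n}{i}$, so the correction is not confined to the tail and a genuine argument is still needed. The paper takes a different route in Theorem~\ref{thm:multipartite}: every dependent set of a complete multipartite graph is dominating, so $D(G,x)$ equals Horrocks' log-concave dependent-set polynomial plus $\sum_i x^{n_i}$, and Proposition~\ref{thm:domincrease} controls the first half of the coefficient sequence.

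The genuine gap is in the random-graph case. The claim that $d_k(G)=\binom{n}{k}$ for all $k\ge(1+\varepsilon)\log_2 n$ is false: $d_k(G)=\binom{n}{k}$ holds if and only if $k>n-1-\delta(G)$, and in ${\mathcal G}(n,1/2)$ one has $\delta(G)\approx n/2$, so $d_k(G)<\binom{n}{k}$ for essentially all $k\le n/2$ --- in particular at and below the claimed mode $\lceil n/2\rceil$, exactly where unimodality must be verified. What your union bound actually yields is that the \emph{proportion} $r_k(G)=d_k(G)/\binom{n}{k}$ tends to $1$, and the inference from ``$d_k$ is pointwise close to the unimodal sequence $\binom{n}{k}$'' to ``$d_k$ is unimodal'' is invalid: unimodality is a chain of consecutive inequalities and is not preserved under small pointwise perturbations. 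This is precisely the difficulty the paper's Lemma~\ref{lem:riffincrease} is designed to overcome: an injection shows $r_i$ is non-decreasing, $d_{i+1}\le d_i$ is equivalent to $r_{i+1}/r_i\le\frac{i+1}{n-i}$, and hence the single threshold inequality $r_k\ge\frac{n-k}{k+1}$ at $k=\lceil n/2\rceil$ forces $d_i$ to be non-increasing from that point on; combined with Proposition~\ref{thm:domincrease} this gives unimodality with the stated mode. The paper then proves the deterministic Theorem~\ref{thm:2lnnunimodal} (minimum degree $\ge 2\log_2 n$ suffices) via the bound $n_i\le n\binom{n-\delta-1}{i}$, and deduces the almost-all statement from Hoeffding's inequality. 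To close your random-graph argument you would need to import Lemma~\ref{lem:riffincrease} or some other mechanism that converts ``most $k$-sets dominate'' into monotonicity of the coefficients past the midpoint.
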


\noindent To date, only a little progress has been made on Conjecture \ref{conj:unimodal}. 
In the following theorem, $kG$ denotes the disjoint union of $k$ copies of $G$, and $G \circ H$ denotes the \emph{corona} \cite{1970Corona} of two disjoint graphs $G$ and $H$ is formed from $G$ and $|V(G)|$ copies of $H$, one for each vertex of $G$, by joining $v \in V(G)$ to every vertex in the corresponding copy of $H$.

\begin{theorem}
\label{thm:DomPolyFamUnimodal}
\textnormal{\cite{2014DomFamUnimodal}} For $n \geq 1$ and any graph $G$:

 \renewcommand{\labelenumi}{(\roman{enumi})}
 \begin{enumerate}
   \item The friendship graph $F_n \cong K_1 \vee nK_2$ is unimodal.
   \item The graph formed by adding a universal vertex to $nK_2 \cup K_1$ is unimodal.
   \item $G \circ K_n$ is log-concave and hence unimodal.
   \item $G \circ P_3$ is log-concave and hence unimodal.

 \end{enumerate}
\end{theorem}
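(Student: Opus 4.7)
My plan is to handle the corona statements (iii) and (iv) through a single factorization of the domination polynomial, and to handle the universal-vertex constructions (i) and (ii) by direct coefficient analysis. The main obstacle will be the latter, which reduces to showing unimodality of a sum of two log-concave polynomials whose peak locations differ by approximately $n/3$, a setting where log-concavity is known to fail in general.

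The first step is to establish that for any graphs $G$ and $H$ with $V(H)\neq\emptyset$,
\[
D(G \circ H, x) \;=\; \bigl(x(1+x)^{|V(H)|} + D(H, x)\bigr)^{|V(G)|}.
\]
I would obtain this by conditioning, vertex by vertex, on whether $v\in V(G)$ lies in the chosen dominating set $S$. If $v\in S$ then $v$ dominates its attached copy $H_v$, so any subset of $V(H_v)$ may be taken, contributing $x(1+x)^{|V(H)|}$. If $v\notin S$ then $S\cap V(H_v)$ must itself be a dominating set of $H_v$ as an isolated graph, contributing $D(H,x)$; such a dominating set is automatically nonempty, so $v$ is dominated and the edges of $G$ play no role. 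The counts across different $v$ factor independently.

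For (iii), specializing to $H=K_n$ gives $D(K_n,x)=(1+x)^n-1$, so the base factor simplifies to $(1+x)^{n+1}-1$, whose coefficient sequence $\binom{n+1}{1},\binom{n+1}{2},\ldots,\binom{n+1}{n+1}$ is log-concave with no internal zeros. For (iv), using $D(P_3,x)=x+3x^2+x^3$ yields the base factor $2x+6x^2+4x^3+x^4$, and the inequalities $6^2\geq 2\cdot 4$ and $4^2\geq 6\cdot 1$ verify log-concavity. In each case the theorem follows from the classical fact that the product, and hence any positive power, of log-concave polynomials with nonnegative coefficients and no internal zeros is again log-concave.

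For (i) and (ii), conditioning on whether the universal vertex lies in $S$ gives
\[
D(F_n,x) = x(1+x)^{2n}+x^n(2+x)^n, \qquad D\bigl(K_1\vee(nK_2\cup K_1),x\bigr) = x(1+x)^{2n+1}+x^{n+1}(2+x)^n,
\]
where the second summand arises because if the universal vertex is excluded then each edge of $nK_2$ must meet $S$ (contributing $2x+x^2$ per edge), together with, in case (ii), the isolated base vertex being forced into $S$. My plan is to write the coefficient explicitly as $d_k=\binom{2n}{k-1}+\binom{n}{k-n}2^{2n-k}$ in case (i), with the second term zero for $k<n$, and to show that $d_{k+1}/d_k$ crosses $1$ exactly once. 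On the range $k<n$ only the binomial summand is active and the sequence is strictly increasing; the technical heart of the argument is to locate a candidate mode $k^{\star}$ in the range $k\geq n$, verify $d_{k^{\star}-1}\leq d_{k^{\star}}\geq d_{k^{\star}+1}$ by estimating binomial ratios, and then propagate monotonicity outward using the log-concavity of each individual summand. Case (ii) is handled by the same ratio argument applied to the analogous explicit expression.
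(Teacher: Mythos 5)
First, note that the paper itself offers no proof of this theorem: it is quoted verbatim from the cited reference \cite{2014DomFamUnimodal}, so there is no in-paper argument to compare yours against; your attempt has to stand on its own.

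Your treatment of (iii) and (iv) is correct and complete. The factorization $D(G\circ H,x)=\bigl(x(1+x)^{|V(H)|}+D(H,x)\bigr)^{|V(G)|}$ is valid (the domination conditions really do decouple over the blocks $\{v\}\cup V(H_v)$, and the observation that a nonempty dominating set of $H_v$ automatically dominates $v$ is exactly the point that makes the edges of $G$ irrelevant), the specializations to $(1+x)^{n+1}-1$ and $2x+6x^2+4x^3+x^4$ are right, and invoking closure of log-concavity (for nonnegative sequences with no internal zeros) under products finishes those two parts cleanly.

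Parts (i) and (ii), however, are not proved. The formulas $D(F_n,x)=x(1+x)^{2n}+x^n(2+x)^n$ and $D\bigl(K_1\vee(nK_2\cup K_1),x\bigr)=x(1+x)^{2n+1}+x^{n+1}(2+x)^n$ are correct, and the easy half (monotone increase for $k<n$, where only the binomial summand is present) is fine. But everything after that is a plan rather than an argument: you announce that one should ``locate a candidate mode $k^{\star}$,'' ``verify $d_{k^{\star}-1}\le d_{k^{\star}}\ge d_{k^{\star}+1}$ by estimating binomial ratios,'' and ``propagate monotonicity outward using the log-concavity of each individual summand,'' without doing any of it. This is precisely the part that cannot be waved at: the two summands are log-concave but peak at roughly $k=n+1$ and $k\approx 4n/3$ respectively, and (as you yourself note at the outset) a sum of two log-concave polynomials with well-separated peaks need not even be unimodal, let alone have a ratio sequence $d_{k+1}/d_k$ that crosses $1$ exactly once. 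Moreover, log-concavity of each summand only propagates monotonicity of that summand, not of the sum, so the ``propagate outward'' step needs an actual mechanism (e.g., showing that on the range where one summand increases the other's increase or the first's dominance compensates). Until the ratio estimates are carried out explicitly for both graphs, (i) and (ii) remain unestablished.
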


\noindent In this paper we extend the families for which unimodality of the domination polynomial is known to paths, cycles and complete multipartite graphs. More significantly, we will also show almost all domination polynomials are unimodal with mode $ \lceil \frac{n}{2}\rceil $.

\section{Paths, Cycles and Complete Multipartite Graphs}

We say a graph contains a \emph{simple $k$-path} if there exists $k$ vertices of degree two which induce a path in $G$. Two families of graphs which contains simple $k$-paths are paths $P_n$ and cycles $C_n$ (where $k = n-2$ and $n-1$, respectively). 

\begin{theorem}
\label{thm:DomPolyIP}
\textnormal{\cite{2012Recurr}} Suppose $G$ is a graph with vertices $u, v, w$ that form a simple 3-path. Then 
$$D(G,x)=x(D(G/u,x) + D(G/u/v,x) + D((G/u/v/w,x))$$
\noindent where $G/u$ is the graph formed by joining every pair of neighbours of $u$ and then deleting $u$ and $G/u/v = (G/u)/v$.
\QED
\end{theorem}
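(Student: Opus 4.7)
My plan is to prove the identity at the level of coefficients: for each $i$, I will construct a bijection
\[
\mathcal{D}_i(G)\;\longleftrightarrow\;\mathcal{D}_{i-1}(G/u)\;\sqcup\;\mathcal{D}_{i-1}(G/u/v)\;\sqcup\;\mathcal{D}_{i-1}(G/u/v/w),
\]
where $\mathcal{D}_j(H)$ denotes the dominating sets of $H$ of size $j$. Summing $x^i$ over all $i$ then yields the polynomial identity.

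The main ingredient is a \emph{lifting lemma}: for any graph $H$ and vertex $z\in V(H)$, if $T$ is a dominating set of $H/z$ then $T\cup\{z\}$ is a dominating set of $H$. The reason is that $z$ already dominates $N_H[z]$ in $H$, while for every $x\notin N_H[z]$ the closed neighborhood $N_H[x]$ equals $N_{H/z}[x]$ (the contraction only modifies edges inside $N_H(z)$). Let $u'$ and $w'$ be the neighbors of $u$ and $w$ outside $\{u,v,w\}$; because $u,v,w$ all have degree $2$ and induce a path, $u'$ is not adjacent to $v$ or $w$, and $w'$ is not adjacent to $u$ or $v$. Applying the lifting lemma three times produces natural candidate injections $T\mapsto T\cup\{u\}$, $T\mapsto T\cup\{v\}$, and $T\mapsto T\cup\{w\}$ from the three smaller dominating-set families into $\mathcal{D}_i(G)$; however, the latter two may sometimes fail to be dominating sets of $G$, which is the source of the subtlety.

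To make the bijection precise, I would classify each $S\in\mathcal{D}_i(G)$ by a greedy rule: if $u\in S$ and $S\setminus\{u\}\in\mathcal{D}(G/u)$, send $S$ to the $G/u$ class; otherwise if some $r\in\{u,v\}\cap S$ satisfies $S\setminus\{r\}\in\mathcal{D}(G/u/v)$, send $S$ to the $G/u/v$ class; otherwise send $S$ to the $G/u/v/w$ class by deleting the unique element of $\{u,v,w\}\cap S$. Since $N_G[v]=\{u,v,w\}$, every dominating set of $G$ meets $\{u,v,w\}$, so the rule always assigns $S$ somewhere. The main obstacle will be verifying that this rule yields a partition in bijection with the three target families. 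The naive lift $T\cup\{v\}$ can fail to dominate $G$ precisely when $w\in T$ and the only witness of $u'$-domination in $G/u/v$ is the added edge $u'w$; in that exceptional case the alternative lift $T\cup\{u\}$ does succeed, since $w\in T$ compensates for dropping $v$. A symmetric case analysis handles the class corresponding to $G/u/v/w$. Verifying that the greedy classification and the case-dependent lifts are mutually inverse then reduces to a careful tracking of $S\cap\{u',u,v,w,w'\}$ together with which vertices of $S$ witness the domination of $u'$ and $w'$, which I expect to be the most delicate step.
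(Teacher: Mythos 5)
The paper does not actually prove this statement; it is quoted from the literature with the proof omitted, so your attempt can only be judged on its own merits. Your general strategy (a coefficient-wise bijection built from a lifting lemma plus a greedy classification of each $S \in \mathcal{D}_i(G)$ by which contraction it should be charged to) is a reasonable one, and the lifting lemma itself is correct. But the specific greedy rule you state does not yield a bijection, and the failure is not the exceptional case you anticipated. Take $G = C_6$ with vertices $u', u, v, w, w', z$ in cyclic order, and let $T = \{z, w'\}$. Both $S = \{u, z, w'\}$ and $S' = \{v, z, w'\}$ are dominating sets of $G$. Neither lands in the $G/u$ class: $S'$ because $u \notin S'$, and $S$ because $\{z,w'\}$ fails to dominate $v$ in $G/u \cong C_5$. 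Your second clause then sends \emph{both} to the $G/u/v$ class with the same image $T$ (via $r=u$ for $S$ and $r=v$ for $S'$), so the map is not injective. Worse, $T$ is also a dominating set of $G/u/v/w \cong C_3$, and nothing is sent to it: the only candidate preimage under your third clause would be $\{w, z, w'\}$, which does not dominate $u$ in $G$. So the $G/u/v$ class is over-covered and the $G/u/v/w$ class is under-covered, even though the totals $d_3(C_6) = 14 = 5 + 6 + 3$ agree. Note also that in this example $w \notin T$ and $T \cup \{v\}$ \emph{does} dominate $G$, so the exceptional configuration you single out ($w \in T$ with $u'$ witnessed only by the added edge $u'w$) is not the one that breaks the argument; the real issue is that $T \cup \{u\}$ and $T \cup \{v\}$ can both dominate $G$ while $T$ simultaneously belongs to $\mathcal{D}(G/u/v)$ and to $\mathcal{D}(G/u/v/w)$, and your priority rule resolves this ambiguity the wrong way.

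The identity itself is true and your framework can be repaired, but the repair is exactly the bookkeeping you deferred. A workable route is to condition on $\sigma = S \cap \{u,v,w\}$ together with four bits of data about $S \cap (V \setminus \{u,v,w\})$ — whether $u'$ (resp.\ $w'$) is dominated from outside $\{u,v,w\}$, and whether $u'$ (resp.\ $w'$) lies in $S$ — and check the resulting identity of polynomials in each of the finitely many cases; this also forces you to decide, in the ambiguous situation above, that the set containing $u$ (not $v$) is the one charged to $G/u/v/w$. Two further small gaps: your claim that $u'$ is not adjacent to $w$ silently assumes $u' \neq w'$, which can fail (e.g.\ when $u,v,w,u'$ lie on a $4$-cycle), so that configuration needs separate handling; and your class-2 clause implicitly requires $|S \cap \{u,v\}| = 1$ for $S \setminus \{r\}$ to be a subset of $V(G/u/v)$ at all, which should be stated and justified.
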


There is no useful closed formula for the coefficients of $D(P_n,x)$ and $D(C_n,x)$. However consider Table~\ref{tab:PnCn}, which displays $D(P_n,x)$, $D(C_n,x)$, and their respective modes.
Note that for both paths and cycles, consecutive modes differ by at most one in these small cases. We will now show that these observations for small $n$ are sufficient to prove that the domination polynomials of all paths and cycles are unimodal.

\begin{theorem}
\label{thm:PCunimodal}
Suppose we have a sequence of polynomials $(f_n)_{n \geq 1}$ with non-negative coefficients which satisfy

\[f_n=x(f_{n-1} + f_{n-2} + f_{n-3}) \refstepcounter{eqcount} \label{eqn:recurr} \tag{\theeqcount}\]

\noindent for $n \geq 4$. Let $\mathcal{P}_n$ denote the property that for all $i \in \{1,2,\ldots,n\}$, $f_i$ is unimodal with mode $m_i$ and if $i \geq 2$, $0 \leq m_i-m_{i-1} \leq 1$. 
Assume $\mathcal{P}_4$ holds. Then $\mathcal{P}_n$ holds for all $n \geq 1$ (and so each $f_n$ is unimodal).
\end{theorem}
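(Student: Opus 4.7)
The plan is to proceed by strong induction on $n$, with base case $\mathcal{P}_4$ supplied by hypothesis. Assuming $\mathcal{P}_{n-1}$ for some $n \ge 5$, I aim to prove $\mathcal{P}_n$. Set $M := m_{n-1}$ and $g_n := f_{n-1} + f_{n-2} + f_{n-3}$, so that $f_n = x\,g_n$. Multiplication by $x$ merely shifts the coefficient sequence, hence $f_n$ is unimodal iff $g_n$ is, and $m_n = 1 + \mathrm{mode}(g_n)$; the inductive step therefore reduces to showing $g_n$ is unimodal with mode in $\{M-1, M\}$.

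The first ingredient is a routine \emph{merging lemma}: if two polynomials with non-negative coefficients are each unimodal with modes differing by at most $1$, then their sum is unimodal, with mode equal to one of the two original modes. This is an elementary coefficient-by-coefficient check above and below the two modes. By iteration, the sum of any finite number of unimodal polynomials whose modes lie in a set of size $\le 2$ is itself unimodal.

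By $\mathcal{P}_{n-1}$ the triple $(m_{n-3}, m_{n-2}, m_{n-1})$ is non-decreasing with consecutive gaps in $\{0,1\}$, which leaves only four possibilities: (a) $(M,M,M)$, (b) $(M-1,M,M)$, (c) $(M-1,M-1,M)$, (d) $(M-2,M-1,M)$. In cases (a)--(c) all three modes lie in $\{M-1,M\}$, so the merging lemma immediately gives $g_n$ unimodal with mode in $\{M-1,M\}$, closing the induction in those cases.

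The main obstacle is case (d), where the three summands have three distinct modes and the merging lemma does not apply directly. Here I would iterate the recurrence, rewriting $f_{n-1} = x(f_{n-2}+f_{n-3}+f_{n-4})$ to obtain $g_n = (1+x)(f_{n-2}+f_{n-3}) + x\,f_{n-4}$. Termwise monotonicity of the summands gives $g_n[i] \le g_n[i+1]$ for $i \le M-3$ and $g_n[i] \ge g_n[i+1]$ for $i \ge M$, so unimodality of $g_n$ with mode in $\{M-1,M\}$ reduces to the pivotal inequality $g_n[M-2] \le g_n[M-1]$, which the above representation rewrites as
\[
g_n[M-1] - g_n[M-2] = \bigl(f_{n-2}[M-1] - f_{n-2}[M-3]\bigr) + \bigl(f_{n-3}[M-1] - f_{n-3}[M-3]\bigr) + \bigl(f_{n-4}[M-2] - f_{n-4}[M-3]\bigr).
\]
The first bracket is $\ge 0$ because $f_{n-2}$ has mode $M-1$. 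The subtle part is showing that the remaining two brackets sum to a non-negative quantity: $f_{n-3}$ is past its peak (at $M-2$) by the time we reach index $M-1$, and $f_{n-4}$ can also be past its peak if $m_{n-4} = M-3$. My plan is to expand $f_{n-3} = x(f_{n-4}+f_{n-5}+f_{n-6})$ (for $n \ge 7$), which re-expresses the problematic difference $f_{n-3}[M-1]-f_{n-3}[M-3]$ in terms of increments of $f_{n-4}, f_{n-5}, f_{n-6}$ at indices that lie below all of their modes (by the constraints $\mathcal{P}_{n-1}$ places on the modes further back), where each increment is manifestly non-negative. The boundary cases $n \in \{5,6\}$, where these further expansions are unavailable, can be verified directly from $\mathcal{P}_4$ and the one-step calculation. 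The hard part of the proof is precisely this final non-negativity check in case (d): it cannot be reduced to termwise comparisons of the original three summands and instead requires the cancellations built into the recurrence together with the full mode-monotonicity supplied by $\mathcal{P}_{n-1}$.
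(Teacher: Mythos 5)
Your overall architecture---induct, use the shift by $x$ to reduce to $g_n=f_{n-1}+f_{n-2}+f_{n-3}$, dispose of the easy mode patterns with a merging lemma, and isolate the single pivotal inequality $g_n[M-2]\le g_n[M-1]$ in the remaining pattern $(M-2,M-1,M)$---is sound and matches the paper's skeleton (your cases (a)--(c) are its Case~1, your case (d) its Case~2). The gap is in case (d), which is the entire content of the theorem. After writing
\[
g_n[M-1]-g_n[M-2]=\bigl(f_{n-2}[M-1]-f_{n-2}[M-3]\bigr)+\bigl(f_{n-3}[M-1]-f_{n-3}[M-3]\bigr)+\bigl(f_{n-4}[M-2]-f_{n-4}[M-3]\bigr),
\]
you propose to show the last two brackets sum to a non-negative quantity by expanding $f_{n-3}=x(f_{n-4}+f_{n-5}+f_{n-6})$, asserting that the resulting increments sit at indices ``below all of their modes.'' That is backwards: $\mathcal{P}_{n-1}$ forces $m_{n-6}\le m_{n-5}\le m_{n-4}\le m_{n-3}=M-2$, so the relevant indices $M-4,\ldots,M-2$ lie \emph{at or above} these modes (for instance $m_{n-5}$ can be as small as $M-4$, making $f_{n-5}[M-2]-f_{n-5}[M-4]\le 0$). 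Expanding downward only pushes the problem onto polynomials with still smaller modes, where the differences tend to be non-positive rather than non-negative; and when $m_{n-4}=M-3$ both of your last two brackets can individually be negative, so it is not even clear that the quantity you are trying to prove non-negative actually is. You also have no argument for $n=5,6$, where the further expansion is unavailable.

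The missing ingredient is the one piece of the inductive hypothesis you never invoke: the mode of $f_{n-1}$ is \emph{exactly} $M$, i.e.\ $f_{n-1}[M-1]\le f_{n-1}[M]$. Keep $g_n=f_{n-1}+f_{n-2}+f_{n-3}$ unexpanded and instead expand $f_{n-1}$: since $f_{n-1}[j]=f_{n-2}[j-1]+f_{n-3}[j-1]+f_{n-4}[j-1]$, the inequality $f_{n-1}[M-1]\le f_{n-1}[M]$ reads
\[
f_{n-2}[M-2]+f_{n-3}[M-2]+f_{n-4}[M-2]\;\le\; f_{n-2}[M-1]+f_{n-3}[M-1]+f_{n-4}[M-1].
\]
Because $m_{n-4}\le m_{n-3}=M-2$ gives $f_{n-4}[M-1]\le f_{n-4}[M-2]$, the $f_{n-4}$ terms may be discarded to yield $f_{n-2}[M-2]+f_{n-3}[M-2]\le f_{n-2}[M-1]+f_{n-3}[M-1]$; adding $f_{n-1}[M-2]\le f_{n-1}[M-1]$ (again the mode of $f_{n-1}$ is $M$) gives $g_n[M-2]\le g_n[M-1]$ directly, and this works for every $n\ge 5$. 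That is the paper's Case~2, and it is the step your proposal does not supply.
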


\begin{table}
\begin{center}
\begin{tabular}{c|c|c}
$n$ & $D(P_n,x)$ & $m_n$ \\ \hline
$1$ & $x$ & $1$ \\ \hline
$2$ & $x^2+2x$ & $1$ \\ \hline
$3$ & $x^3+3x^2+x$ & $2$ \\ \hline
$4$ & $x^4+4x^3+4x^2$ & $3$ 
\end{tabular}
\qquad
\begin{tabular}{c|c|c}
$n$ & $D(C_n,x)$ & $m_n$ \\ \hline
$3$ & $x^3+3x^2+3x$ & $2$ \\ \hline
$4$ & $x^4+4x^3+6x^2$ & $2$ \\ \hline
$5$ & $x^5+5x^4+10x^3+5x^2$ & $3$ \\ \hline
$6$ & $x^6+6x^5+15x^4+14x^3+3x^2$ & $4$ 
\end{tabular}
\caption{Domination polynomials for paths and cycle of small order}%
\label{tab:PnCn}
\end{center}
\end{table}

\begin{proof}
We will prove our assertion via induction on $n \geq 4$. Our base case is satisfied by the assumption that $\mathcal{P}_4$ holds. For some $k \geq 4$, suppose $\mathcal{P}_k$ holds, and so $\mathcal{P}_j$ holds for all $1 \leq j \leq k$. To show $\mathcal{P}_{k+1}$ holds it suffices to show $f_{k+1}$ is unimodal with mode $m_{k+1}=m_k \text{ or } m_k+1$. By our inductive hypothesis, $f_{k}$, $f_{k-1}$, and $f_{k-2}$ are all unimodal with modes $m_k$, $m_{k-1}$, and $m_{k-2}$ respectively. Additionally, $m_{k-1} \leq m_{k} \leq m_{k-1}+1$ and $m_{k-2} \leq m_{k-1} \leq m_{k-2}+1$. For simplicity let $m_k=m$. Note that $m -2 \leq m_{k-2} \leq m_{k-1} \leq m_k = m$. Furthermore for each $n \geq 1$ let

$$f_{n}=\sum_{j=0}^{\infty} a_{n,j}x^j.$$

\noindent Therefore for $n=k, k-1, k-2$ we have

$$a_{n,0} \leq a_{n,1} \leq \cdots \leq a_{n,m-2} \text{ and } a_{n,m} \geq a_{n,m+1} \geq \cdots.$$

\noindent By the recursive relation $(\ref{eqn:recurr})$ we see that $a_{k+1,0} =0$ and for each $j \geq 1$

$$a_{k+1,j} = a_{k,j-1}+a_{k-1,j-1}+a_{k-2,j-1}.$$

\noindent Therefore

$$0=a_{k+1,0} \leq a_{k+1,1} \leq \cdots \leq a_{k+1,m-1} \text{ and } a_{k+1,m+1} \geq a_{k+1,m+2} \geq \cdots.$$ 

\noindent We will now show $a_{k+1,m-1} \leq a_{k+1,m}$. Consider the following two cases:

\vspace{2mm}
\noindent \textbf{Case 1: $m -1 \leq m_{k-2}\leq m$}
\vspace{1mm}

As $m -1 \leq m_{k-2}$ then the modes of $f_{k}$, $f_{k-1}$, and $f_{k-2}$ are each at least $m-1$. Thus $a_{k,m-2} \leq a_{k,m-1}$, $a_{k-1,m-2} \leq a_{k-1,m-1}$, and $a_{k-2,m-2} \leq a_{k-2,m-1}$. Therefore 

\begin{align*}
a_{k+1,m-1} &= a_{k,m-2}+a_{k-1,m-2}+a_{k-2,m-2} \\
           & \leq a_{k,m-1}+a_{k-1,m-1}+a_{k-2,m-1}  \\
           & = a_{k+1,m}.
\end{align*}


\vspace{2mm}
\noindent \textbf{Case 2: $m_{k-2} = m-2$}
\vspace{1mm}

By the recursive relation the polynomials follow we obtain $a_{k,0}=0$ and $a_{k,j} = a_{k-1,j-1}+a_{k-2,j-1}+a_{k-3,j-1}$ for each $j \geq 1$. Note $a_{k,m} \geq a_{k,m-1}$ because the mode of $f_k$ is $m$. Therefore 

$$a_{k-1,m-1}+a_{k-2,m-1}+a_{k-3,m-1} \geq a_{k-1,m-2}+a_{k-2,m-2}+a_{k-3,m-2}.$$

\noindent Let the mode of $f_{k-3}$ be $m_{k-3}$. By our inductive hypothesis $m_{k-3} \leq m_{k-2}=m-2$, and therefore $a_{k-3,m-1} \leq a_{k-3,m-2}$. Furthermore 

$$a_{k-1,m-1}+a_{k-2,m-1}\geq a_{k-1,m-2}+a_{k-2,m-2}.$$

\noindent Again the mode of $f_k$ is $m$ so $a_{k,m-1} \geq a_{k,m-2}$. Hence

\begin{align*}
a_{k+1,m-1} &= a_{k,m-2}+a_{k-1,m-2}+a_{k-2,m-2} \\
           &  \leq a_{k,m-1}+a_{k-1,m-1}+a_{k-2,m-1} \\
           &  = a_{k+1,m}.
\end{align*}


\vspace{2mm}

As $a_{k+1,m-1}\leq a_{k+1,m}$ then $f_{k+1}$ is unimodal with mode at either $m$ or $m+1$. Therefore $\mathcal{P}_{k+1}$ holds and by induction $\mathcal{P}_{n}$ holds for all $n \geq 1$.
\end{proof}

Note that for a vertex $u$ in either $P_n$ or $C_n$, $P_{n}/u \cong P_{n-1}$ and $C_{n}/u \cong C_{n-1}$.  Thus by Theorem \ref{thm:DomPolyIP} paths and cycles follow the recursion relation $(\ref{eqn:recurr})$. It follows from Theorem \ref{thm:PCunimodal} and Table \ref{tab:PnCn} that the following corollary holds.

\begin{corollary}
For $n \in \mathbb{N}$ and $n \geq 3$, $P_n$ and $C_n$ are unimodal. \QED
\end{corollary}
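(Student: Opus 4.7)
The plan is to read off paths and cycles as two instances of the general recursive setup of Theorem \ref{thm:PCunimodal}, using Theorem \ref{thm:DomPolyIP} to generate the required recursion and Table \ref{tab:PnCn} to supply the base case.

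First I would verify the recursion for paths. Any internal vertex $u$ of $P_n$ has exactly two neighbours, and joining them and deleting $u$ collapses $P_n$ to $P_{n-1}$. Once $n$ is large enough that $P_n$ contains three consecutive interior vertices, Theorem \ref{thm:DomPolyIP} gives $D(P_n,x) = x(D(P_{n-1},x) + D(P_{n-2},x) + D(P_{n-3},x))$, and any remaining small cases can be checked directly against Table \ref{tab:PnCn}. Setting $f_n := D(P_n,x)$, the sequence then satisfies the recursion $(\ref{eqn:recurr})$ for all $n \geq 4$.

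For cycles, every vertex has degree $2$, so any three consecutive vertices of $C_n$ induce a simple $3$-path, and each identification returns a smaller cycle: $C_n/u \cong C_{n-1}$. Theorem \ref{thm:DomPolyIP} therefore gives $D(C_n,x) = x(D(C_{n-1},x) + D(C_{n-2},x) + D(C_{n-3},x))$ whenever $n-3 \geq 3$. Reindexing by $g_n := D(C_{n+2},x)$ puts this in the form $(\ref{eqn:recurr})$ for all $n \geq 4$.

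With the recursion in hand, I would next verify the base case $\mathcal{P}_4$ using Table \ref{tab:PnCn}. For paths, the displayed polynomials $D(P_1), D(P_2), D(P_3), D(P_4)$ are each unimodal with modes $1, 1, 2, 3$, so consecutive mode differences lie in $\{0,1\}$. For the reindexed cycle sequence $g_1, g_2, g_3, g_4 = D(C_3), D(C_4), D(C_5), D(C_6)$, the modes are $2, 2, 3, 4$, again with consecutive differences in $\{0,1\}$. Thus $\mathcal{P}_4$ holds for both sequences, and Theorem \ref{thm:PCunimodal} immediately yields that $D(P_n,x)$ is unimodal for all $n \geq 1$ and $D(C_n,x)$ is unimodal for all $n \geq 3$.

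Because Theorem \ref{thm:PCunimodal} shoulders the entire inductive argument, there is essentially no substantial obstacle here. The one mild subtlety is bookkeeping at the boundary between where Theorem \ref{thm:DomPolyIP} applies directly and where one instead relies on the explicit entries of Table \ref{tab:PnCn} (for instance, $P_4$ has only two degree-$2$ vertices), but this is purely computational and contributes nothing conceptually new.
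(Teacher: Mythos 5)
Your proposal is correct and follows essentially the same route as the paper: derive the recurrence $(\ref{eqn:recurr})$ for $D(P_n,x)$ and $D(C_n,x)$ from Theorem \ref{thm:DomPolyIP}, verify the base condition $\mathcal{P}_4$ from Table \ref{tab:PnCn}, and invoke Theorem \ref{thm:PCunimodal}. Your additional bookkeeping --- the reindexing $g_n = D(C_{n+2},x)$ for cycles and the direct check of the small cases where the simple $3$-path hypothesis does not literally apply (e.g.\ $P_4$) --- is a welcome tightening of the paper's terser argument rather than a different method.
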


We remark that Theorem \ref{thm:PCunimodal} can be leveraged to show many other families of graphs which contain simple $k$-paths are unimodal. For example, let $L_n$ denote a path on $n-2$ vertices with a $K_2$ joined to one of the leaves (See Figure \ref{fig:Ln}).

\begin{center}
\begin{figure}[!h]
\begin{center}
\begin{tikzpicture}[> = stealth,semithick]
\begin{scope}[every node/.style={circle,thick,draw}]
    \node (1) at (0,-0.5) {};
    \node (2) at (0,0.5) {};   
    \node (3) at (1,0) {};   
    \node (4) at (2,0) {}; 
      
    \node (5) at (3.5,0) {};
    \node (6) at (4.5,0) {};

\end{scope}

\begin{scope}
    \path [-] (1) edge node {} (2);
    \path [-] (2) edge node {} (3);
    \path [-] (1) edge node {} (3);
    \path [-] (3) edge node {} (4);
    \path [-] (4) edge node {} (2.5,0);
    \path [-] (3,0) edge node {} (5); 
    \path [-] (5) edge node {} (6);
\end{scope}

\path (3,0) -- node[auto=false]{\ldots} (2.5,0);
\end{tikzpicture}
\end{center}
\caption{The graph $L_n$}%
\label{fig:Ln}%
\end{figure}
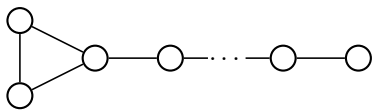
\end{center}

\noindent For $n \geq 5$, $L_n$ contains a simple $n-4$-path and therefore by Theorem \ref{thm:DomPolyIP} follows the recurrence relation $(\ref{eqn:recurr})$. Furthermore, Table~\ref{tab:Ln} shows that the base condition in Theorem~\ref{thm:PCunimodal} holds for four consecutive values of $n$ -- 4,5,6 and 7. It follows that $L_n$ is unimodal for $n\geq 4$. 

\begin{table}[!h]
\begin{center}
\begin{tabular}{c|c|c}
$n$ & $D(L_n,x)$ & $m_n$ \\ \hline
$4$ & $x^4+4x^3+5x^2+x$ & $2$ \\ \hline
$5$ & $x^5+5x^4+9x^3+6x^2$ & $3$ \\ \hline
$6$ & $x^6+6x^5+14x^4+14x^3+4x^2$ & $4$ \\ \hline
$7$ & $x^7+7x^6+20x^5+27x^4+15x^3+x^2$ & $4$  
\end{tabular}
\caption{Domination polynomials for graphs $L_n$.}%
\label{tab:Ln}
\end{center}
\end{table}

\vspace{0.2in}

We shall now show complete multipartite graphs are unimodal. 
We shall rely on an important result of  \noindent Alikhani et al. that shows that the coefficients of the domination polynomial are non-decreasing up to $\frac{n}{2}$.

\begin{proposition}
\textnormal{\cite{IntroDomPoly2014}} Let $G$ be a graph of order $n$. Then for every $0 \leq i < \frac{n}{2}$, we have $d_i(G) \leq d_{i+1}(G)$.
\label{thm:domincrease}
\end{proposition}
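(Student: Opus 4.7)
The plan is to prove the inequality by a straightforward double-counting (equivalently, shadow) argument on dominating sets of consecutive sizes. Consider the set of ordered pairs
\[
\mathcal{F} = \{(S,T) : S \subset T \subseteq V(G),\ |S|=i,\ |T|=i+1,\ \text{both } S \text{ and } T \text{ are dominating}\}.
\]
I would count $|\mathcal{F}|$ two ways: once by summing over the first coordinate, and once by summing over the second.

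For the lower bound, fix a dominating set $S$ of cardinality $i$. For any vertex $v\in V(G)\setminus S$, the superset $T=S\cup\{v\}$ still satisfies $N[u]\cap T\supseteq N[u]\cap S\neq\emptyset$ for every $u\in V(G)$, so $T$ is again dominating. This produces exactly $n-i$ valid pairs with first coordinate $S$, giving $|\mathcal{F}| \geq (n-i)\,d_i(G)$.

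For the upper bound, fix a dominating set $T$ of cardinality $i+1$. The number of subsets $S\subset T$ with $|S|=i$ is $\binom{i+1}{i}=i+1$, and of these only those that happen to be dominating contribute. Hence $|\mathcal{F}| \leq (i+1)\,d_{i+1}(G)$. Combining the two bounds yields
\[
(n-i)\,d_i(G) \;\leq\; (i+1)\,d_{i+1}(G),
\]
and the hypothesis $i<\tfrac{n}{2}$ gives $n-i\geq i+1$, so $d_i(G)\leq d_{i+1}(G)$ as required.

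There is no real obstacle here: the only thing to check is the monotonicity direction of the inequality $(n-i)\geq(i+1)$, which is exactly the threshold $i<n/2$ in the statement, confirming that the cutoff in the proposition is tight from the point of view of this argument. I would note, for clarity, that the lower-bound step uses the essential (and trivial) monotonicity fact that any superset of a dominating set is dominating; this is what makes the shadow map surjective enough to yield the $(n-i)$ factor.
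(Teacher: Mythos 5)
Your argument is correct, and it is essentially the same injection/double-counting the paper itself uses: the Proposition is only cited there (to \cite{IntroDomPoly2014}) without proof, but in Section 3 the authors count the pairs $(v,S)$ with $S$ dominating and $v\notin S$ against the pairs $(v,T)$ with $T$ dominating and $v\in T$ to get exactly your inequality $(n-i)\,d_i(G)\leq (i+1)\,d_{i+1}(G)$. Your proposal just carries that bound one step further, observing that $i<\tfrac{n}{2}$ forces $n-i\geq i+1$, so nothing is missing.
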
 

We are now ready to proceed.

\begin{theorem}
\label{thm:multipartite}
For $n_1, \ldots, n_k \in \mathbb{N}$, the complete multipartite graph $K_{n_1, \ldots, n_k }$ is unimodal.
\end{theorem}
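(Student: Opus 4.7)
My plan is to derive an explicit counting formula for $d_i$, use Proposition~\ref{thm:domincrease} to cover the ``first half'' of the coefficients, and then treat the ``second half'' by a case split on the largest part size. The formula comes from the observation that a subset $S\subseteq V(K_{n_1,\ldots,n_k})$ is dominating if and only if $S$ is not a non-empty proper subset of any part $V_j$: a vertex $v\in V_j\setminus S$ is dominated precisely when $S$ meets $V\setminus V_j$. Counting size-$i$ subsets contained in a single part of size strictly greater than $i$ gives, for $i\geq 1$,
$$d_i = \binom{n}{i} - \sum_{j\,:\,n_j>i}\binom{n_j}{i},$$
where $n=n_1+\cdots+n_k$. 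By Proposition~\ref{thm:domincrease}, $d_i\leq d_{i+1}$ for $i<n/2$, so it suffices to prove that $\{d_i\}$ is unimodal on $[\lceil n/2\rceil,n]$.

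Set $M=\max_j n_j$. If $M\leq n/2$, then for every $i\geq\lceil n/2\rceil$ the sum above is empty, so $d_i=\binom{n}{i}$ is non-increasing on this range and unimodality follows. Otherwise $M>n/2$; in that case there is a unique part (WLOG $V_1$) of size $M$, and the other parts have sizes summing to $n-M<n/2$, so each of them is strictly less than any $i\geq\lceil n/2\rceil$ and only $V_1$ contributes to the sum. Writing $h(i):=\binom{n}{i}-\binom{M}{i}$ (so that $h(i)=\binom{n}{i}$ for $i>M$), this gives
$$d_i = h(i) \text{ for } i\in[\lceil n/2\rceil,n]\setminus\{M\}, \qquad d_M = h(M)+1.$$

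Next I would show that $h$ is unimodal on $[\lceil n/2\rceil,n]$. From
$$h(i+1)-h(i) = \tfrac{1}{i+1}\bigl[\tbinom{n}{i}(n-2i-1)-\tbinom{M}{i}(M-2i-1)\bigr],$$
the sign of $h(i+1)-h(i)$ on $i\geq n/2$ is controlled by comparing the strictly increasing ratio $r_i:=\binom{n}{i}/\binom{M}{i}$ (one checks $r_{i+1}/r_i=(n-i)/(M-i)>1$) against the strictly decreasing quantity $(2i+1-M)/(2i+1-n)$. These cross at most once, so $h$ is strictly increasing then strictly decreasing on $[\lceil n/2\rceil,n]$, with at most one plateau gap (equivalently, a length-two plateau) at its mode $i_h$. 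Moreover $h(M+1)-h(M)=\binom{n}{M+1}-\binom{n}{M}+1\leq 0$, since $\binom{n}{M}-\binom{n}{M+1}\geq 1$ whenever $M>n/2$, so $i_h\leq M$.

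The main obstacle is showing that the $+1$ bump at position $M$ does not create a second local maximum in $d$. My plan is a short case analysis. If $h(M)=\max h$ (i.e., $M$ lies in the mode plateau of $h$), the bump produces a unique maximum of $d$ at $M$, and the monotonicity of $h$ on each side of $M$ transfers directly to $d$. If $h(M)<\max h$, then either $i_h=M-1$ (so $h(M-1)=\max h\geq h(M)+1$) or $i_h\leq M-2$; in the latter sub-case the ``at most one plateau gap at the mode'' property from the previous paragraph forces $h(M-1)>h(M)$ strictly, whence $h(M-1)-h(M)\geq 1$. Either way $d_M=h(M)+1\leq h(M-1)=d_{M-1}$, so the bump is absorbed without creating a new maximum. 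Combined with the monotonicity already established on either side, this gives unimodality of $d$, completing the proof.
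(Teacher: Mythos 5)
Your proof is correct (modulo the degenerate case $k=1$, where $r_i$ is constant but $D(G,x)=x^n$ is trivially unimodal), and it takes a genuinely different route from the paper's. The paper uses the same skeleton --- Proposition~\ref{thm:domincrease} for the first half, then a case split on whether some part has size at least $n/2$, then absorbing a ``$+1$ bump'' at the index equal to that part's size --- but it gets the unimodality of the underlying sequence for free from Horrocks' theorem: every dependent set of a complete multipartite graph is dominating, so $D(G,x)=f(x)+\sum_j x^{n_j}$ with $f$ the dependent-set polynomial, and the log-concavity of $f$ supplies both the unimodality and the absorption of the bump. You instead compute $d_i=\binom{n}{i}-\sum_{j:\,n_j>i}\binom{n_j}{i}$ explicitly and prove unimodality of $h(i)=\binom{n}{i}-\binom{M}{i}$ on $[\lceil n/2\rceil,n]$ by an elementary crossing argument between the increasing ratio $\binom{n}{i}/\binom{M}{i}$ and the decreasing quantity $(2i+1-M)/(2i+1-n)$, then absorb the bump using integrality together with the ``strictly up, at most one tie, strictly down'' structure. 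Your version is self-contained (no appeal to an external log-concavity theorem) and treats the boundary case $M=n/2$, including $K_{n/2,n/2}$, uniformly rather than as a separate case, at the cost of more computation. One gluing point to make explicit in a final write-up: the ratio argument only controls the differences $h(i+1)-h(i)$ for $i\le M-1$, so you should state alongside your computation of $h(M+1)-h(M)\le 0$ that $h(i)=\binom{n}{i}$ is non-increasing for $i\ge M+1$ in order to conclude unimodality of $h$ on all of $[\lceil n/2\rceil,n]$; that step is trivial but currently implicit.
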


\begin{proof}
Set $G = K_{n_1, \ldots, n_k }$. Consider any subset of vertices $S \subseteq V(G)$ which is dependent. Therefore $S$ contains two adjacent vertices $u$ and $v$. Note that as $G$ is complete multipartite, each of $u$ and $v$ are adjacent to every vertex in $G$ except the other vertices in their respective parts. As $u$ and $v$ are adjacent, they are not in the same part of $G$ and hence $S$ dominate $G$. Let $f(x) = f_{G}(x)$ denote the dependent polynomial of $G$ (the generating function of  the number of dependent sets of cardinality $k$ in $G$). As mentioned earlier, $f(x)$ is log-concave \cite{2002Horrocks}. Furthermore 

$$D(G,x) = f(x)+\sum_{i=1}^k x^{n_i},$$

\noindent as the only dominating sets which are not dependent sets is all the vertices of a part of $G$. Let $G$ have $n$ vertices. By Proposition~\ref{thm:domincrease} $d_i(G) \leq d_{i+1}(G)$ for every $0 \leq i < \frac{n}{2}$. If all $n_j < \frac{n}{2}$, then $d_i(G)=f_i$ for all $i \geq \frac{n}{2}$ where $f_i$ is the coefficient of $x^i$ in $f(x)$. Furthermore as $f(x)$ is log-concave then $f(x)$ as unimodal and hence $D(G,x)$ is unimodal. So suppose there exists some $n_j \geq \frac{n}{2}$. Note that there is either exactly one $n_j \geq \frac{n}{2}$ or $G \cong K_{\frac{n}{2},\frac{n}{2}}$.

First suppose there is exactly one $n_j \geq \frac{n}{2}$. Then $d_i(G)=f_i$ for all $i \geq \frac{n}{2}$ except for $d_j(G)=f_j+1$. As the sequence $f(x)$ is log-concave and hence unimodal then the only way for the sequence to not be unimodal is for $f_j=f_{j+1}<f_{j+2}$ or $f_{j-2}>f_{j-1}=f_{j}$. However each case would contradict $f(x)$ being log-concave.

Now suppose $G \cong K_{\frac{n}{2},\frac{n}{2}}$. Note that every subset of vertices which contains at least $\frac{n}{2}+1$ vertices is a dominating set as it necessarily contains vertices from both parts. Therefore $d_i(G)={n \choose i}$ for all $i \geq \frac{n}{2}+1$. Furthermore $d_i(G)$ is non-increasing for $i \geq \frac{n}{2}+1$ and hence $G$ is unimodal.
\end{proof}

\section{Almost all graphs are unimodal}

In this section we will show that the domination polynomial of almost all graphs is unimodal with mode $\lceil \frac{n}{2} \rceil$, and hence that any counterexamples to unimodality are relatively rare. 


%
%
%
We will now show graphs with minimum degree at least $2\log_2(n)$ are unimodal. We begin with a few preliminary definitions and observations. For a graph of order $n$, let $r_i(G)$ proportion of subsets of vertices of $G$ with cardinality $i$ which are dominating. That is,

$$r_i(G) = \frac{d_i(G)}{{ n \choose i }}.$$

\noindent Note that $0 \leq r_i(G) \leq 1$. For all $1 \leq i \leq n$, let $\mathcal{D}_i(G)$ denote the collection of dominating sets of cardinality  exactly $i$. Note for any dominating set $S \in \mathcal{D}_{i}(G)$ and any vertex $v \in V-S$, $S \cup \{v\} \in \mathcal{D}_{i+1}(G)$. More specifically if we let $A_{i+1} = \{(v,S):S \in  \mathcal{D}_{i+1}(G), v \in S\}$ and $B_i = \{(v,S):S \in  \mathcal{D}_{i}(G), v \notin S\}$ there is an injective mapping $f:B_i \rightarrow A_{i+1}$ defined as $f(v,S)=(v,S\cup \{v\})$. Therefore $|A_{i+1}| \geq |B_{i}|$ and equivalently $(i+1)d_{i+1}(G) \geq (n-i)d_{i}(G)$. Furthermore

$$r_{i+1}(G) = \frac{d_{i+1}(G)}{{ n \choose i+1 }} \geq \frac{(n-i)d_{i}(G)}{(i+1){ n \choose i+1 }} = \frac{d_{i}(G)}{{ n \choose i}}=r_{i}(G).$$

\noindent This allow us to obtain the following lemma.

\begin{lemma}
\label{lem:riffincrease}
Let $G$ be a graph on $n$ vertices, and $k \geq \frac{n}{2}$. If $r_{k}(G) \geq \frac{n-k}{k+1}$ then $d_{i+1}(G) \leq d_{i}(G)$ for all $i \geq k$. In particular if $k= \lceil \frac{n}{2} \rceil$ then $G$ is unimodal with mode $ \lceil \frac{n}{2} \rceil$.
\end{lemma}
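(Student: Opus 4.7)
The plan is to convert the desired inequality $d_{i+1}(G)\le d_i(G)$ into a statement purely about the ratios $r_i(G)$, and then exploit two monotonicity facts: the ratios $r_i$ are non-decreasing in $i$ (which is exactly the injection-based computation carried out in the paragraph preceding the lemma), while $(n-i)/(i+1)$ is strictly decreasing in $i$.

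\medskip

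\textbf{Step 1 (translation).} I would first rewrite the target inequality. Since $d_i(G)=\binom{n}{i}r_i(G)$, dividing shows that
$$d_{i+1}(G)\le d_i(G) \iff (n-i)\,r_{i+1}(G)\le (i+1)\,r_i(G).$$
Because $r_{i+1}(G)\le 1$, a sufficient condition for this is
$$r_i(G)\ge \frac{n-i}{i+1}. \refstepcounter{eqcount}\label{eqn:suff}\tag{\theeqcount}$$

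\textbf{Step 2 (monotonicity of the two sides).} Next I would record two facts. First, from the discussion preceding the lemma, $r_i(G)\le r_{i+1}(G)$ for every $i$, so $r_i(G)\ge r_k(G)$ whenever $i\ge k$. Second, the function $i\mapsto (n-i)/(i+1)$ is strictly decreasing on all of $\mathbb{R}$ (its derivative is $-(n+1)/(i+1)^2<0$), so for $i\ge k$ we have $(n-i)/(i+1)\le (n-k)/(k+1)$.

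\textbf{Step 3 (assembling the proof of the main statement).} Combining Step~2 with the hypothesis $r_k(G)\ge (n-k)/(k+1)$ gives, for every $i\ge k$,
$$r_i(G)\ge r_k(G)\ge \frac{n-k}{k+1}\ge \frac{n-i}{i+1},$$
which is precisely \eqref{eqn:suff}. By Step~1 this yields $d_{i+1}(G)\le d_i(G)$ for all $i\ge k$, proving the first assertion.

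\textbf{Step 4 (the ``in particular'' clause).} For the second assertion I take $k=\lceil n/2\rceil$. The first assertion then gives that the coefficients $d_i(G)$ are non-increasing for $i\ge \lceil n/2\rceil$. Proposition~\ref{thm:domincrease} says the coefficients are non-decreasing for $0\le i<n/2$, which (whether $n$ is even or odd) implies $d_0(G)\le d_1(G)\le\cdots\le d_{\lceil n/2\rceil}(G)$. Splicing the two halves together shows $D(G,x)$ is unimodal with mode $\lceil n/2\rceil$.

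\medskip

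There is no real obstacle here; the argument is a short chain of inequalities. The one subtlety worth being careful about is handling the parity of $n$ in Step~4 when invoking Proposition~\ref{thm:domincrease}, so that the ``meeting point'' between the non-decreasing and non-increasing halves is genuinely at $\lceil n/2\rceil$ in both cases.
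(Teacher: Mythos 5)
Your proposal is correct and follows essentially the same route as the paper's proof: translate $d_{i+1}\le d_i$ into the sufficient condition $r_i\ge(n-i)/(i+1)$ using $r_{i+1}\le 1$, combine the monotonicity of $r_i$ (from the injection argument preceding the lemma) with the monotonicity of $(n-i)/(i+1)$, and splice with Proposition~\ref{thm:domincrease} for the ``in particular'' clause. The only difference is cosmetic: you make explicit the decreasingness of $(n-i)/(i+1)$ and the parity check in the last step, both of which the paper leaves implicit.
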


\begin{proof}
Set $d_i=d_{i}(G)$ and $r_i=r_{i}(G)$ for all $i$. Note that

$$d_{i+1} \leq d_{i} \hspace{2mm} \Leftrightarrow  \hspace{2mm} r_{i+1}{n \choose i+1} \leq r_{i}{n \choose i} \hspace{2mm} \Leftrightarrow \hspace{2mm} \frac{r_{i+1}}{r_i} \leq \frac{i+1}{n-i} \hspace{2mm} \Leftrightarrow \hspace{2mm} \frac{r_{i}}{r_{i+1}} \geq \frac{n-i}{i+1}.$$

\noindent Therefore for each $i$, if $r_{i} \geq \frac{n-i}{i+1}$ then $d_{i+1} \leq d_{i}$ as $r_{i+1} \leq 1$. So suppose for some $k \geq \frac{n}{2}$, $r_{k}(G) \geq \frac{n-k}{k+1}$. Then for any $i \geq k$ we have 

$$r_{i}(G) \geq r_{k}(G) \geq \frac{n-k}{k+1} \geq \frac{n-i}{i+1}$$

\noindent and hence $d_{i+1} \leq d_{i}$. Finally, if $k= \lceil \frac{n}{2} \rceil$ then together with Proposition~ \ref{thm:domincrease} we have

$$d_1 \leq d_2 \leq \cdots \leq d_{\lceil \frac{n}{2} \rceil} \geq \cdots \geq d_n.$$

\end{proof}

\begin{theorem}
\label{thm:2lnnunimodal}
If $G$ is a graph with $n$ vertices with minimum degree $\delta(G) \geq 2\log_2(n)$ then $D(G,x)$ is unimodal with mode at $\lceil \frac{n}{2} \rceil$.
\end{theorem}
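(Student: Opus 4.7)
The strategy is to apply Lemma~\ref{lem:riffincrease} with $k = \lceil n/2 \rceil$, for which it suffices to prove that the \emph{ratio} $r_{\lceil n/2 \rceil}(G) = d_{\lceil n/2 \rceil}(G)/\binom{n}{\lceil n/2 \rceil}$ is very close to $1$; specifically, that it is at least $(n-k)/(k+1)$. Since $k = \lceil n/2 \rceil \geq n/2$ we have $(n-k)/(k+1) \leq k/(k+1) = 1 - 1/(k+1)$, so it will be more than enough to show that at most a $1/(2n)$-fraction of $k$-subsets fail to dominate.

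To bound the fraction of non-dominating $k$-subsets, I would use a simple union bound over vertices. A set $S$ fails to dominate iff some vertex $v$ has $N[v] \cap S = \emptyset$, which requires $S$ to be a $k$-subset of $V \setminus N[v]$. Thus the number of non-dominating $k$-subsets is at most
$$\sum_{v \in V(G)} \binom{n - \deg(v) - 1}{k}.$$
Dividing by $\binom{n}{k}$, the failure probability for a uniformly random $k$-subset is at most $\sum_{v} \binom{n-\deg(v)-1}{k}/\binom{n}{k}$. Using the elementary inequality
$$\frac{\binom{n-d-1}{k}}{\binom{n}{k}} = \prod_{i=0}^{d}\frac{n-k-i}{n-i} \leq \left(\frac{n-k}{n}\right)^{d+1},$$
which holds because $(n-k-i)/(n-i)$ is non-increasing in $i$, and noting that $(n-k)/n \leq 1/2$ for $k = \lceil n/2 \rceil$, each term is at most $(1/2)^{\delta(G)+1}$.

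Plugging in $\delta(G) \geq 2\log_2 n$ gives a per-vertex bound of at most $(1/2)\cdot 2^{-2\log_2 n} = 1/(2n^2)$, and summing over the $n$ vertices yields a total failure probability of at most $1/(2n)$. Hence $r_{\lceil n/2 \rceil}(G) \geq 1 - 1/(2n)$. A short arithmetic check confirms $1 - 1/(2n) \geq 1 - 1/(k+1) \geq (n-k)/(k+1)$ since $k+1 \leq n+1 \leq 2n$. Lemma~\ref{lem:riffincrease} then yields $d_{i+1}(G) \leq d_i(G)$ for all $i \geq \lceil n/2 \rceil$, and combined with Proposition~\ref{thm:domincrease} this shows $D(G,x)$ is unimodal with mode $\lceil n/2 \rceil$.

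The only non-routine step is choosing the right union bound and verifying the inequality $\binom{n-d-1}{k}/\binom{n}{k} \leq ((n-k)/n)^{d+1}$; this is the main point where the hypothesis $\delta \geq 2\log_2 n$ is used, and it is what makes the $2\log_2 n$ threshold exactly sharp enough for $n \cdot 2^{-(\delta+1)}$ to be $o(1)$. Everything else is bookkeeping given the preceding lemma.
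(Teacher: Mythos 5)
Your proof is correct and follows essentially the same route as the paper: a union bound over vertices to count non-dominating $k$-subsets, the bound $\binom{n-\delta-1}{k}/\binom{n}{k} \leq \left(\frac{n-k}{n}\right)^{\delta+1}$ (your $n\left(\frac{n-k}{n}\right)^{\delta+1}$ is literally the paper's $(n-k)\left(\frac{n-k}{n}\right)^{\delta}$), and then Lemma~\ref{lem:riffincrease} together with Proposition~\ref{thm:domincrease}. The only difference is cosmetic: you work directly at $k=\lceil n/2\rceil$ where the paper bounds $r_i$ for general $i$ and then specializes via monotonicity.
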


\begin{proof}
Set $\delta=\delta(G)$, $d_i=d_{i}(G)$ and $r_i=r_{i}(G)$ for all $i$. Let $n_i$ denote the number of non-dominating subsets $S \subseteq V(G)$ of cardinality $i$. Note that $n_i = { n \choose i} - d_i$ and hence

$$r_i = 1- \frac{n_i}{ { n \choose i}}.$$

\noindent We will now show $n_i \leq n {n-\delta-1 \choose i}$. For each vertex $v \in V$ let $n_i(v)$ denote the number of subsets sets which do not dominate $v$. A subset $S$ does not dominate $v$ if and only if it does not contain any vertices in $N[v]$. Therefore $n_i(v)$ simply counts every subset of $V(G)$ with $i$ vertices which omits $N[v]$. Hence $n_i(v)= {n-\deg(v)-1 \choose i}$. Furthermore any non-dominating set of order $i$ must not dominate some vertex of $G$. Therefore

$$n_i \leq \sum_{v \in V}n_i(v) = \sum_{v \in V}{n-\deg(v)-1 \choose i} \leq \sum_{v \in V}{n-\delta-1 \choose i} = n{n-\delta-1 \choose i},$$

\noindent and

\vspace{-6mm}

\begin{align*}
r_i =&  1- \frac{n_i}{ { n \choose i}} \\
    \geq&  1- \frac{n{n-\delta-1 \choose i}}{ { n \choose i}} \\
    \geq&  1- \frac{n (n-\delta-1)!}{i!(n-\delta-1-i)!} \cdot \frac{i!(n-i)!}{n!}\\ 
    \geq&  1- \frac{(n-1-\delta)!}{(n-1)!} \cdot \frac{(n-i)!}{(n-i-\delta-1)!}\\  
    \geq&  1- \frac{(n-i)(n-i-1)(n-i-2) \cdots (n-i-\delta)}{(n-1)(n-2) \cdots (n-\delta)}.
\end{align*}

\noindent Note that for any $k \geq 0$, $\frac{n-i-k}{n-k} \geq \frac{n-i-k-1}{n-k-1}$ 
holds as $i \geq 0$. Therefore

$$\frac{n-i}{n} \geq \frac{n-i-1}{n-1} \geq \cdots \geq \frac{n-i-\delta}{n-1-\delta}. $$

\noindent and so 

$$r_i \geq 1-(n-i) \left( \frac{n-i}{n} \right)^\delta.$$

\noindent Now let $f(x,\delta) = 1-(n-x) \left( \frac{n-x}{n} \right)^\delta$ and $g(x)=\frac{n-x}{x+1} = \frac{n+1}{x+1}-1$ for $x, \delta \in [0,n]$. Note that $f(x,\delta)$ is an increasing function of both $x$ and $\delta$ and $g(x)$ is also a decreasing function of $x$. By Lemma \ref{lem:riffincrease}, it suffices to show $f(\frac{n}{2},2\log_2(n)) \geq g(\frac{n}{2})$. Note

$$f\left(\frac{n}{2},2\log_2(n)\right) = 1-\frac{n}{2} \left( \frac{1}{2} \right)^{2\log_2(n)}=1-\frac{n}{2n^2}=1-\frac{1}{2n}$$

\noindent and 

$$g\left(\frac{n}{2}\right) = \frac{\frac{n}{2}}{\frac{n}{2}+1} = \frac{n}{n+2} = 1-\frac{2}{n+2}.$$

\noindent Therefore $f(\frac{n}{2},2\log_2(n)) \geq g(\frac{n}{2})$ if and only if $\frac{2}{n+2} \geq \frac{1}{2n}$ which holds for all $n\geq 1$.
\end{proof}

Let ${\mathcal G}(n,p)$ denote the Erd\"{o}s-R\'{e}nyi random graph model on $n$ vertices (each edge exists is independent present with probability $p$). 

\begin{theorem}
\label{thm:almostall}
Fix $p \in (0,1)$. Let $G_{n} \in {\mathcal G}(n,p)$. Then with probability tending to $1$, $D(G_n,x)$ is unimodal with mode $\lceil \frac{n}{2} \rceil$.
\end{theorem}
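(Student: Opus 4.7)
The plan is to reduce the statement to Theorem~\ref{thm:2lnnunimodal}: it suffices to show that with probability tending to $1$, the random graph $G_n \in \mathcal{G}(n,p)$ satisfies $\delta(G_n) \geq 2\log_2(n)$. Since $p$ is a fixed constant in $(0,1)$, each vertex has expected degree $(n-1)p$, which is linear in $n$ and therefore dominates $2\log_2(n)$ for all sufficiently large $n$. The remaining work is a standard concentration argument.

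Concretely, first I would fix a vertex $v \in V(G_n)$ and note that $\deg(v) \sim \mathrm{Bin}(n-1,p)$ with mean $\mu = (n-1)p$. A multiplicative Chernoff bound gives
$$\Pr\bigl[\deg(v) \leq (1-\epsilon)\mu\bigr] \leq \exp\!\bigl(-\epsilon^2\mu/2\bigr)$$
for any $\epsilon \in (0,1)$. Since $2\log_2(n) = o(\mu)$, for all large enough $n$ one may take $\epsilon = 1/2$ (say), so that $(1-\epsilon)\mu = (n-1)p/2 \geq 2\log_2(n)$, and deduce
$$\Pr\bigl[\deg(v) < 2\log_2(n)\bigr] \leq \exp\!\bigl(-(n-1)p/8\bigr) = e^{-\Omega(n)}.$$

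A union bound over the $n$ vertices then yields
$$\Pr\bigl[\delta(G_n) < 2\log_2(n)\bigr] \leq n\, e^{-\Omega(n)} \longrightarrow 0$$
as $n \to \infty$. On the complementary event, Theorem~\ref{thm:2lnnunimodal} immediately delivers that $D(G_n,x)$ is unimodal with mode $\lceil n/2 \rceil$, which establishes the claim. There is essentially no obstacle in this last step: the hard work has been absorbed into Theorem~\ref{thm:2lnnunimodal}, and what remains is just verifying that a constant-edge-probability random graph is concentrated well above the logarithmic minimum-degree threshold, which is a routine tail estimate.
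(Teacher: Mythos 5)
Your proposal is correct and follows essentially the same route as the paper: show that $\delta(G_n) \geq 2\log_2(n)$ with probability tending to $1$ via a binomial tail bound (the paper uses Hoeffding's inequality where you use a multiplicative Chernoff bound, an immaterial difference) plus a union bound over the $n$ vertices, and then invoke Theorem~\ref{thm:2lnnunimodal}.
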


\begin{proof}
The degree of any vertex $v$ of $G_n$ has a binomial distribution $X_v$ with $N = n-1$, with mean $p(n-1)$. From Hoeffding's well known bound on the tail of a binomial distribution, it follows that for any fixed $\varepsilon > 0$,
\[ \mbox{Prob}\left( X_v \leq (p-\varepsilon)(n-1) \right) \leq e^{-2\varepsilon^2(n-1)}.\]
Thus 
\[ \mbox{Prob}\left( \cup_{v} X_v \leq (p-\varepsilon)(n-1) \right) \leq ne^{-2\varepsilon^2(n-1)} \rightarrow 0.\]
It follows that for sufficiently large $n$, $\delta(G_n) > (p-\varepsilon)(n-1) > 2 \log_2(n)$ with probability tending to $1$.
By Theorem \ref{thm:2lnnunimodal}, it follows that, with probability tending to $1$, $D(G_n,x)$ is unimodal with mode $\lceil \frac{n}{2} \rceil$.

\end{proof}

\section{Open Problem}

Theorem \ref{thm:2lnnunimodal} shows Conjecture \ref{conj:unimodal} is true for graphs with sufficiently high minimum degree. However, the conjecture remains elusive for graphs with low minimum degree, and in particular for trees. Another interesting family of graphs to investigate are graphs with universal vertices. We verified using Maple all graphs of order up to 10 which have universal vertices are unimodal, with mode at either $\lceil \frac{n}{2} \rceil$ or $\lceil \frac{n}{2} \rceil+1$. If a graph $G$ with $n$ vertices has a universal vertex then $d_i(G) \geq {n-1 \choose i-1}$ and hence $r_i(G) \geq \frac{i}{n}$. It is possible a technique similar to the one used in Theorem \ref{thm:2lnnunimodal} can yield some results for this class.

From a well known theorem of Newton, if a polynomial $f$ with positive coefficients has all real roots then $f$ is log-concave and hence unimodal, and Darroch \cite{1964Darroch} further showed that mode of such an $f$ is at either $\left\lfloor \frac{f'(1)}{f(1)} \right\rfloor$ or $\left\lceil \frac{f'(1)}{f(1)} \right\rceil$. In \cite{2020Avd} the authors defined the \emph{average size of a dominating set} in a graph $G$ as $\avd(G) = \frac{D'(G,1)}{D(G,1)}$. They also showed $\frac{n}{2} \leq \avd(G) \leq \frac{n+1}{2}$ for graphs with minimum degree $\delta \geq 2\log_2(n)$. Theorem \ref{thm:2lnnunimodal} implies that the mode of $D(G,x)$ is at $\lceil \avd(G) \rceil$ or $\lfloor \avd(G) \rfloor$ for graphs with minimum degree $\delta \geq 2\log_2(n)$. This leads us to the following question: For a graph $G$, is the mode of $D(G,x)$ always at $\lceil \avd(G) \rceil$ or $\lfloor \avd(G) \rfloor$? If not, how much can the mode and $\avd(G)$ differ?

Finally, Proposition~\ref{thm:domincrease} shows that up the half-way mark, the coefficients of the domination polynomial are non-decreasing, so that any problem with unimodality must occur after this point. We can show that if a graph has no isolated vertices, then from $\lfloor \frac{3n}{4} \rfloor$, the coefficients are non-increasing. It would certainly be worthwhile to investigate further the last half of the coefficients sequence for graphs with isolated vertices, and the middle quarter for those that do not. 

\section*{Acknowledgements}
 
\noindent J. Brown acknowledges research support from Natural Sciences and Engineering Research Council of Canada (NSERC), grants RGPIN 2018-05227.


\bibliographystyle{plain}
\bibliography{MyBibFile}

\end{document}